\documentclass[10pt,oneside,reqno]{amsart}
\usepackage{amsmath,amssymb}
\usepackage{amsthm} 

\newtheorem{thm}{Theorem}[section]

\newtheorem{lemma}[thm]{Lemma}

\theoremstyle{definition}

\theoremstyle{remark}

\numberwithin{equation}{section}
\newtheorem{exa}[thm]{Example}

\newtheorem{todo}{ToDo}

\def\dif{\operatorname{d}}
\def\du{\dif u}
\def\a{\alpha}
\def\b{\beta}
\def\c{\gamma}
\allowdisplaybreaks

\begin{document}

\title[Asymptotic expansions of integral means]
{Asymptotic expansions of integral means and applications to the ratio of gamma functions}
\author{Neven Elezovi\'c and Lenka Vuk\v si\'c}

\address{Neven Elezovi\'c, Faculty of Electrical Engineering and Computing, University of
Zagreb, Unska 3, 10000 Zagreb, Croatia}
\email{neven.elez\@fer.hr}

\address{Lenka Vuk\v{s}i\'c, Faculty of Electrical Engineering and Computing, University
of Zagreb, Unska 3, 10000 Zagreb, Croatia}
\email{lenka.vuksic\@fer.hr}


\begin{abstract}
	Integral means are important class of bivariate means. In this
	paper we prove the very general algorithm for calculation of coefficients
	in asymptotic expansion of integral mean. It is based on explicit solving 
	the equation of the form $B(A(x))=C(x)$, where $B$ and $C$ 
	have known asymptotic expansions. The results are ilustrated by calculation of 
	some important integral means connected with gamma and digamma functions.
\end{abstract}

\keywords{asymptotic expansion, integral mean, gamma function, Wallis ratio, digamma function}

\subjclass[2010]{41A60, 33B15}

\maketitle

\section{Introduction}

	Let $f$ be a strictly monotone continuous function. Then there exists the unique $\vartheta\in[s,t]$ for which
	\begin{equation*}
		\frac1{t-s}\int_s^t f(u)\du=f(\vartheta).
	\end{equation*}
	$\vartheta$ is called {\it integral $f$-mean} of $s$ and $t$, and denoted by
	\begin{equation}
		I_f(s,t)=f^{-1}\left(\frac{1}{t-s}\int_s^t f(u) \du \right),
	\end{equation}
	see \cite{Bu,BMV} for details.

	Many  classical well known means can be interpreted as integral means, for suitably choosen function $f$.
	For example,
        \begin{equation*}
        \begin{aligned}
                f(x)&=x,&&      I_x(s,t)=\frac{s+t}2=A(s,t),\\
                f(x)&=\log x,&&I_{\log}(s,t)=\frac1e\left(
                \frac{t^t}{s^s}\right)^{\frac1{t-s}}=I(s,t),\\
                f(x)&=\frac1x,&&I_{1/x}(s,t)=\frac{s-t}{\log s-\log t}=L(s,t),\\
                f(x)&=\frac1{x^2},&&I_{1/x^2}(s,t)=\sqrt{st}=G(s,t),\\
                f(x)&=x^{r},\quad r\ne0,-1\qquad&&I_{x^{r}}(s,t)
		=\left(\frac{t^{r+1}-s^{r+1}}{(r+1)(t-s)}\right)
                ^{\frac1{r}}=L_r(s,t),\\
        \end{aligned}
        \end{equation*}
        where $A$, $I$, $L$, $G$, and $L_r$ are arithmetic, identric, logarithmic,
        geometric and generalized
        logarithmic mean.

	Although integral means are defined as functions of $x$, $s$ and $t$,
	it turns out that the easier notation will be obtained if we introduce the following two variables:
	\begin{equation*}
		\a=\frac{t+s}2,\qquad \b=\frac{t-s}2.
	\end{equation*}
	Then $t=\a+\b$ and $s=\a-\b$. 
	All asymptotic expansions will be given in terms of $\a$ and $\b$.

	In recent papers \cite{EV,EV2} asymptotic expansions of many bivariate means are found, using the explicit formulas for observed means. The coefficients of asymptotic expansions are very usefull in analysis of considered means. Here is a short table.
	General form of the expansion for a mean $M(s,t)$ is:
	\begin{equation*}
		M(x+s,x+t)=x+\a+\sum_{n=2}^\infty c_n x^{-n+1} 
	\end{equation*}
	and the first few coefficients are
	\begin{equation*}
	\def\st{\hbox{\vrule height 15pt depth6pt width0pt}}
	\begin{array}{|c|c|c|c|}
	\hline
	\text{mean}&c_2&c_3&c_4\st\\
	\hline
	\hline
	A&0&0&0\st\\
	\hline
	I&-\frac16\b^2&\frac16\a\b^2&-\frac1{360}\b^2(60\a^2+13\b^2)\st\\
	\hline
	L&-\frac13\b^2&\frac13\a\b^2&-\frac1{45}\b^2(15\a^2+4\b^2)\st\\
	\hline
	G&-\frac12\b^2&\frac12\a\b^2&-\frac18\b^2(4\a^2+\b^2)\st\\
	\hline
	L_r&\frac16(r{-}1)\b^2&-\frac16(r{-}1)\a\b^2&\frac1{360}(r{-}1)\b^2((-2r^2{-}5r{+}13)\b^2{+}60\a^2)\st\\
	\hline	
	\end{array}
	\end{equation*}

        In the paper \cite{EP} the connection between differential and integral $f$-mean of a function $f$ was analyzed and results were applied to digamma function. Among others, it was proved that
        \begin{equation*}
                \psi\biggl( \frac{t-s}{\log t-\log s}\biggr)
                \le\frac1{t-s}\int_s^t\psi(u)\dif u,
        \end{equation*}
        i.e.,
        \begin{equation*}
                L(s,t)\le I_\psi(s,t).
        \end{equation*}
        Here, $\psi$ denotes digamma function.
        As a consequence it follows that the function $x\mapsto I_\psi(x+s,x+t)-x$ is increasing concave function and
        \begin{equation*}
                I_\psi(x+s,x+t)-x\uparrow A(s,t),
                \qquad\text{as }x\to\infty.
        \end{equation*}

        This equation was essential in the proving the lower bound in the second Gautschi-Kershow inequalities, see \cite[Theorem 4]{EGP} for details:
        \begin{equation*}
                \exp[\psi(x+I_\psi(s,t))]
                <\biggl(\frac{\Gamma(x+t)}{\Gamma(x+s)}
                \biggr)^{\tfrac1{t-s}}<\exp[\psi(x+A(s,t))].
        \end{equation*}

        In a recent paper \cite{CEV} a complete asymptotic expansion of the function $G$ in the formula
        \begin{equation}
        \biggl(\frac{\Gamma(x+t)}{\Gamma(x+s)}\biggr)^{\tfrac1{t-s}}=    \exp[\psi(G(x))]
        \label{digamma}
        \end{equation}
        was found. The first few terms are
        \begin{equation}
        G(x)=c_0+\frac{c_1}x+\frac{c_2}{x^2}
        +\frac{c_3}{x^3}
        +\frac{c_4}{x^4}+\dots
        \label{G}
        \end{equation}
        where
	\begin{equation}
	\begin{aligned}
        c_0&=1,\\
        c_1&=\a,\\
        c_2&=-\frac{\b^2}6,\\
        c_3&=\frac1{12}\b^2(2\a-1),\\
        c_4&=-\frac1{360}\b^2\Big[60\a^2-60\a+13\b^2+5\Big].
        \end{aligned}
	\label{coef-psi}
	\end{equation}

        It is easy to see that the form \eqref{digamma} is equivalent to
        \begin{equation*}
                G(x)=I_\psi(x+s,x+t).     
        \end{equation*}
        Therefore, expansion \eqref{G} is the expansion of the integral mean of digamma function. Second Gautschi-Kershow inequality can be read as
        \begin{equation*}
                x+I_\psi(s,t)<I_\psi(x+s,x+t)<x+A(s,t).
        \end{equation*}
        The right side of this inequality is in fact the beginning of an asymptotic expansion. Using the result above, one can state the following inequality which still needs to be proved:
        \begin{equation}
                x+A(s,t)-\frac1{24}(t-s)^2\frac1x
                <I_\psi(x+s,x+t)<x+A(s,t).
        \end{equation}

	The goal of this paper is to generalize this approach from digamma function and other particular functions to an arbirary taken function $f$ (which should have some additional properties).

	The main point is that in the case of digamma function, an explicit formula for the inverse function is not known. Therefore, we cannot derive an explicit formula for integral mean of this function. So we need to derive an algorithm for calculation of these integral means using other methods.
        
        We shall assume that the function $f$ posses an asymptotic expansion of the form
	$$
		f(x)\sim x^u \sum_{n=0}^{\infty} b_n x^{-n},
	$$
	and we shall derive the asymptotic expansion of its integral mean
	$$
		I_f(x+s,x+t)\sim x \sum_{n=0}^{\infty} a_n x^{-n},
	$$
	where $u$ is a real number and $(b_n)$ is the given sequence.

\section{Solving an equation in terms of asymptotic series}

        In this section the general result concerning composition of asymptotic series will be derived.

	We are dealing with asymptotic series, because of the 
	intended applications. But the results of this section can be applied to formal power series as well. The problem is very old and it is treated for at least two hundred years.  However, to the best of our knowledge, we did not succeed to find in a literature the solution and algorithm in the form which is given here.

	The following lemma has its origin in Euler's work, see \cite{G} for historical treatment in the case of Taylor series. We used it already in the context of asimptotic series in \cite[Lemma 6]{CEV}:

\begin{lemma}
	\label{potl}
	Let $a_0\neq 0$ and $g(x)$ a function with asymptotic expansion 
	\begin{equation*}
                g(x)\sim \sum_{n=0}^{\infty}a_nx^{-n}.
	\end{equation*}
	Then for all real $r$ it holds
	\begin{equation*}
                [g(x)]^{r}\sim \sum_{n=0}^{\infty}P_n(r)x^{-n},
	\end{equation*}
	where	
        \begin{equation}
        \begin{aligned}
                P_0(r)&=a_0^{r},\\
                P_n(r)&=\frac{1}{na_0}\sum_{k=1}^{n}[k(1+r)-n]a_kP_{n-k}(r).
	\end{aligned}
        \label{pot}
        \end{equation}
\end{lemma}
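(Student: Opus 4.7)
The plan is to reduce the identity $h(x)=[g(x)]^r$ to a first-order algebraic relation between the coefficients by differentiating, exactly as one does for formal power series, and then matching coefficients.

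First I would establish existence of the expansion of $h$ and fix the leading coefficient. Since $a_0\ne 0$, write $g(x)=a_0(1+u(x))$ with $u(x)\sim\sum_{n\ge1}(a_n/a_0)x^{-n}=O(x^{-1})$. For any fixed $N$ the binomial truncation
\[ (1+u(x))^r = \sum_{k=0}^{N}\binom{r}{k}u(x)^k + O(x^{-(N+1)}) \]
together with the fact that each $u(x)^k$ has an asymptotic expansion starting at order $x^{-k}$ produces, on collecting contributions, an expansion $[g(x)]^r\sim\sum_{n\ge0}P_n x^{-n}$ with $P_0=a_0^r$. This construction also justifies differentiating the expansion termwise, which I will use in the next step.

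The heart of the argument is the differential identity that comes from $h=g^r$, namely
\[ g(x)\,h'(x) = r\,g'(x)\,h(x). \]
Multiplying through by $x$ and using the termwise rule $xf'(x)\sim -\sum_n n c_n x^{-n}$ for $f\sim\sum c_n x^{-n}$, both sides become products of known asymptotic series. Extracting the coefficient of $x^{-N}$ yields $-\sum_{k=0}^{N}(N-k)a_kP_{N-k}$ on the left and $-r\sum_{k=1}^{N}k\,a_kP_{N-k}$ on the right. The $k=0$ term on the left contributes $-Na_0P_N$ and nothing on the right, so isolating it gives
\[ Na_0P_N = \sum_{k=1}^{N}[k(1+r)-N]\,a_k P_{N-k}, \]
which is precisely the recurrence \eqref{pot}.

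The main obstacle is the first step: ensuring that $[g(x)]^r$ really admits a Poincar\'e expansion in $x^{-1}$ for arbitrary real $r$ (the binomial series no longer terminates as it would for $r\in\mathbb{N}$) and that term-by-term differentiation of such an expansion is legitimate. The binomial truncation argument handles both points simultaneously, and after that the rest is purely formal coefficient manipulation from the differential identity, with the base case $P_0=a_0^r$ already in hand.
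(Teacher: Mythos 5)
The paper itself does not prove this lemma; it only cites its earlier appearance (\cite[Lemma 6]{CEV}) and Gould's historical account, so your proposal has to stand on its own. Its first half is fine: writing $g=a_0(1+u)$ with $u=O(x^{-1})$ and truncating the binomial series does give the existence of an expansion $[g(x)]^r\sim\sum_n P_n x^{-n}$ with $P_0=a_0^r$, and it shows that each $P_n$ is the same polynomial expression in $a_0^r,a_1,\dots,a_n$ as in the formal power-series case. The derivation of the recurrence via the identity $g\,h'=r\,g'\,h$ is also the classical Euler route, so the skeleton is right.

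The gap is the sentence claiming that the binomial truncation ``also justifies differentiating the expansion termwise.'' It does not. The hypothesis only says that $g$ has a Poincar\'e expansion; it gives no information about $g'$ (indeed $g$ need not even be differentiable), and it is a standard fact that an asymptotic expansion may not be differentiated term by term (e.g.\ $g(x)=1+e^{-x}\sin(e^{x})$ has expansion $1+0\cdot x^{-1}+\cdots$ while $xg'(x)$ has no expansion at all). Nothing in the truncation $ (1+u)^r=\sum_{k\le N}\binom{r}{k}u^k+O(x^{-N-1})$ controls $h'$, because differentiating it produces $u'$, which is not controlled either. The repair is standard and cheap: since your first step shows that $P_n(r)$ depends only on $a_0,\dots,a_n$ and coincides with the coefficient of $t^n$ in the formal series $a_0^r\bigl(1+\sum_{k\ge1}\binom{r}{k}U(t)^k\bigr)$, $U(t)=\sum_{n\ge1}(a_n/a_0)t^n$, you may carry out the differentiation argument entirely in the ring of formal power series in $t=1/x$ (or, equivalently, for the truncated polynomial $g_N(x)=\sum_{n=0}^N a_nx^{-n}$, whose $r$-th power is genuinely analytic near infinity), where the identity $g\,h'=r\,g'\,h$ and coefficient extraction are purely algebraic; matching the coefficient of $t^{N}$ there gives exactly $Na_0P_N=\sum_{k=1}^{N}[k(1+r)-N]a_kP_{N-k}$, i.e.\ \eqref{pot}, without ever differentiating the function $g$. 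With that replacement your argument is complete.
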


	We shall use another one functional transformation of asymptotic series. The proof of the folowing lemma is easy, see \cite{CEV} for details.

\begin{lemma}
	\label{lemma-log}
    Let $a_0\ne 0$ and
    $$
        A(x)\sim\sum_{n=0}^\infty a_nx^{-n}
    $$
    be a given asymptotic expansion. Then the composition $L(x)=\ln(A(x))$ has
    asymptotic expansion of the following form
    $$
        L(x)\sim\sum_{n=1}^\infty L_n x^{-n}
    $$
    where
 \begin{equation}
        L_n=\frac{a_n}{a_0}-\frac1{na_0}\sum_{k=1}^{n-1} k L_k a_{n-k},\qquad n\ge1.
    \label{log} 
    \end{equation}
\end{lemma}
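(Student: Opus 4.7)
The natural approach is to exploit the logarithmic-derivative identity
\begin{equation*}
A(x)\,L'(x)=A'(x),
\end{equation*}
which converts the non-algebraic operation ``compose with $\ln$'' into a purely algebraic manipulation on asymptotic series involving only differentiation, multiplication, and sum. Before starting, one notes that since $A(x)\to a_0$ as $x\to\infty$, the constant term of $L$ must equal $\ln a_0$; this accounts for the series for $L$ starting at $n=1$ once $\ln a_0$ is absorbed as $L_0$ (or, equivalently, after the harmless rescaling that makes $a_0=1$). Only the coefficients $L_n$ with $n\ge1$ need to be determined.

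Carrying out the plan, differentiating the expansions of $A$ and of the as-yet-unknown $L$ termwise yields
\begin{equation*}
A'(x)\sim-\sum_{n=1}^\infty n\,a_n x^{-n-1},\qquad L'(x)\sim-\sum_{k=1}^\infty k\,L_k x^{-k-1}.
\end{equation*}
Substituting these into $A(x)L'(x)=A'(x)$ and reading off the coefficient of $x^{-(n+1)}$ in the Cauchy product on the left-hand side gives
\begin{equation*}
-\sum_{k=1}^{n} k\,L_k\,a_{n-k}=-n\,a_n.
\end{equation*}
Isolating the $k=n$ contribution, which equals $-n a_0 L_n$, and solving for $L_n$ reproduces exactly the recurrence \eqref{log}. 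The uniqueness of the coefficients of an asymptotic expansion then shows that the $L_n$ so defined are the correct ones, and a quick sanity check at $n=1$ gives $L_1=a_1/a_0$, matching the direct computation $\ln(a_0+a_1/x+\cdots)=\ln a_0+a_1/(a_0 x)+\cdots$.

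The only non-routine step is the justification of termwise differentiation of an asymptotic series in descending powers of $x$. This is standard whenever $A$ arises, as it does throughout the paper, from a smooth function whose derivative itself possesses such an expansion; under that mild regularity hypothesis the manipulation above is rigorous. I therefore expect no real obstacle beyond making that regularity assumption explicit, after which the recurrence follows at once from the identity principle for asymptotic expansions.
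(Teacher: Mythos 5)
The paper offers no written proof of this lemma (it is dispatched with ``the proof is easy, see [CEV]''), so your attempt can only be judged on its own merits. Your algebra is the classical derivation and it is correct: from $A(x)L'(x)=A'(x)$, extracting the coefficient of $x^{-n-1}$ gives $\sum_{k=1}^{n}kL_k a_{n-k}=na_n$, and isolating $k=n$ yields exactly \eqref{log}; your remark about the missing constant term $\ln a_0$ (the statement tacitly assumes $a_0=1$, as in all the paper's applications) is also well taken.

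The genuine weak point is the step you yourself flag: termwise differentiation of an asymptotic expansion is not valid in general, and the extra ``regularity hypothesis'' you propose is not part of the lemma, so as written you prove a strictly weaker statement. The issue is real, not cosmetic: for $A(x)=1+e^{-x}\sin(e^{x})$ one has $A(x)\sim 1+0\cdot x^{-1}+0\cdot x^{-2}+\cdots$, yet $A'$ admits no expansion in powers of $x^{-1}$, so neither of the two series you differentiate need exist as asymptotic expansions of actual functions --- even though the conclusion of the lemma still holds for this $A$. The standard way to get the lemma exactly as stated is to decouple the analytic and the formal parts: write $A(x)=a_0\bigl(1+u(x)\bigr)$ with $u(x)\sim\sum_{n\ge1}(a_n/a_0)x^{-n}$, and use the fact that composing an asymptotic expansion with a function analytic at the limiting value (here $\ln(1+u)$, since $u(x)\to0$) again yields an asymptotic expansion whose coefficients are those of the formal composition; no differentiation of the function is needed. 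The recurrence \eqref{log} is then a statement about formal power series only, and there your computation $A\,L'=A'$ is perfectly legitimate, since formal differentiation is always allowed. So the fix is not to add hypotheses but to run your coefficient calculation at the formal level and invoke the composition theorem for the analytic content; with that rearrangement your argument becomes a complete proof of the lemma as stated.
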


	If $A(x)$ and $B(x)$ are asymptotic series as $x\to\infty$, 
	one may ask when the composition $B(A(x))$ will also be an asymptotic series? This will not always be the case. For example, if $A(x)=1/x$, then $B(1/x)$ need not to be an asymptotic series. See \cite{Er} for additional discussion.

	Suppose $A(x)$, $B(x)$ and $C(x)$ have the following form
	\begin{align}
        \label{A}
		A(x)&\sim x^w\sum_{n=0}^\infty a_nx^{-n},\\
	\label{B}
                B(x)&\sim x^u\sum_{n=0}^\infty b_nx^{-n},\\
        \label{C}
		C(x)&\sim x^v\sum_{n=0}^\infty c_nx^{-n},
	\end{align}
	where $w$, $u$ and $v$ are real numbers such that $a_0\ne0$, $b_0\ne 0$ and $c_0\ne0$. 
	If the expansion of $B$ is infinite, then $B(A(x))$ is an asymptotic series if and only if $w\ge0$.

	For a given asymptotic series $B(x)$ and $C(x)$, we are trying to solve equation of the type
	\begin{equation}
		B(A(x))=C(x),
	\label{BA}
	\end{equation}
	where $A(x)$ is represented by its asymptotic series. The solution is posible under 
	reasonable conditions posed on series $B$ and $C$. An efficient recursive algorithm 
	for calculation of the coefficients $(a_n)$ will be derived.

	For the applications in integral means, the case $w=1$ is the only interesting case.
	But, we shall give an algorithm in more general situation.

	The two cases should be treated separately, the first one where $w> 0$, and the second one where $w<0$. Also, the value of the exponent $u$ is essential, the case $u=0$ and $u\ne0$ should be treated separately.

\subsection{The case $w>0$}

	The main result is given in the following theorem.

\begin{thm}
	\label{komp}
	Let $B(x)$ and $C(x)$ be asymptotic series given by \eqref{B} and \eqref{C}. 
	Suppose $u\ne0$.
	Then there exists asymptotic series $A(x)$ of the form \eqref{A} with $w>0$, such that it holds \eqref{BA} 
	if and only if the following conditions are satisfied:
	\begin{enumerate}
		\item $u$ and $v$ have the same sign and $w=v/u$ is a rational number,
		\item If $nw$ is not a positive integer then $b_n=0$,
		\item $b_0$ and $c_0$ are of the same sign.
	\end{enumerate}
	In that case, coefficients of the series $A(x)$ can be calculated using the following algorithm:
	\begin{align}
        \label{c0}
		a_0&= (c_0/b_0)^{1/u},\\
		a_n&= -\frac{a_0}{b_0 u P_0(u)} \bigg[
			\sum_{j=1}^{\lfloor n/w \rfloor}b_jP_{n-wj}(u-j)\\
			&+\frac{b_0}{n a_0}\sum_{k=1}^{n-1}[k(1+u)-n]a_kP_{n-k}(u)-c_n
			\bigg],
	\label{c1}
        \end{align}
	where $P_n$ are given by \eqref{pot}. 	
\end{thm}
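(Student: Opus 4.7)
The strategy is to substitute the ansatz
\begin{equation*}
    A(x)\sim x^w\sum_{n=0}^\infty a_n x^{-n}
\end{equation*}
into $B$, expand each power of $A(x)$ via Lemma~\ref{potl}, and compare term-by-term with the known expansion of $C$. Writing $B(A(x))\sim\sum_{j=0}^\infty b_j[A(x)]^{u-j}$ and applying Lemma~\ref{potl} to $x^{-w}A(x)\sim\sum_n a_n x^{-n}$ with exponent $u-j$ yields
\begin{equation*}
    [A(x)]^{u-j}\sim x^{w(u-j)}\sum_{k=0}^\infty P_k(u-j)\,x^{-k},
\end{equation*}
so that $B(A(x))$ becomes the formal double sum $\sum_{j,k\ge0}b_j P_k(u-j)\,x^{wu-wj-k}$, to be matched with $C(x)\sim\sum_n c_n x^{v-n}$.

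\textbf{Necessity of conditions (1)--(3).} The unique highest-order term on the left is $b_0 a_0^u x^{wu}$ and on the right $c_0 x^v$, which forces $wu=v$ and $a_0^u=c_0/b_0$. Since $w>0$ and $u\ne0$, the first equation makes $u$ and $v$ share a sign, and the second requires $b_0$ and $c_0$ to share a sign so that $a_0$ can be taken as a positive real (needed for $[A(x)]^r$ to be well defined in Lemma~\ref{potl}); this yields \eqref{c0} together with condition (3). All remaining contributions on the left live at exponents $v-wj-k$ with $j\ge1$ and integer $k\ge0$; for each to fit the integer-shift pattern of $C$ one needs $wj$ to be a positive integer whenever $b_j\ne0$, which is (2); this in turn forces $w$ to be rational and completes (1).

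\textbf{Recurrence and sufficiency.} Given (1)--(3) with $a_0$ fixed by \eqref{c0}, equating coefficients of $x^{v-n}$ for $n\ge1$ produces
\begin{equation*}
    b_0 P_n(u)+\sum_{j=1}^{\lfloor n/w\rfloor}b_j P_{n-wj}(u-j)=c_n,
\end{equation*}
the $j$-sum being restricted to $j$ with $wj\in\mathbb{Z}_{>0}$ by (2). To isolate $a_n$, split the recurrence \eqref{pot} for $P_n(u)$ by peeling off its $k=n$ term, whose contribution equals $[n(1+u)-n]a_n P_0(u)/(na_0)=u\,a_n\,a_0^{u-1}$. Substituting this back, using $a_0^{u-1}=P_0(u)/a_0$, and solving the resulting linear equation for $a_n$ yields exactly \eqref{c1}. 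Since all the $P_{n-wj}(u-j)$ for $j\ge1$ and the surviving $P_{n-k}(u)$ for $k<n$ involve only $a_0,\dots,a_{n-1}$, the recursion is well defined and produces the required series $A$.

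\textbf{Main obstacle.} The delicate part is the bookkeeping: identifying precisely which pairs $(j,k)$ contribute to the coefficient of $x^{v-n}$, confirming that $a_n$ enters this equation only through the $k=n$ piece of $P_n(u)$, and handling the truncation of the inner sum imposed by (2). Once this structural step is cleanly organized, the passage from the matching identity to \eqref{c1} is routine symbolic manipulation based on Lemma~\ref{potl}.
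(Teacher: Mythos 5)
Your proposal is correct and follows essentially the same route as the paper: substitute the ansatz, expand each power $[A(x)]^{u-j}$ via Lemma~\ref{potl}, read off conditions (1)--(3) and \eqref{c0} from the leading terms, and then isolate $a_n$ by peeling the $k=n$ term out of the recursion \eqref{pot} for $P_n(u)$ to obtain \eqref{c1}. No substantive differences from the paper's constructive proof.
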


\begin{proof}
	We shall give a constructive proof. We shall start with generalized asymptotic power series for $A(x)$ of the form \eqref{A}. 
	The coefficients of the series $A(x)$ will be recursively defined 
	such that \eqref{BA} is satisfied. This equation has the following form
	\begin{equation}
		\sum_{j=0}^\infty b_j (A(x))^{u-j}\sim
		x^v\sum_{n=0}^\infty c_nx^{-n}
	\end{equation}
	i.e.
	\begin{equation}
		\sum_{j=0}^\infty b_j x^{w(u-j)}\biggl(\sum_{k=0}^\infty a_kx^{-k}\biggr)^{u-j}
		\sim x^v\sum_{n=0}^\infty c_nx^{-n}
	\label{c3}
	\end{equation}
	Each power of the series for $A(x)$ can be transfered into asymptotic series using procedure given in \eqref{pot}. This can be made simultaneously, such that calculation of $P_{n}$ follows immediately after the coefficient $a_n$ is calculated. 
	In the algorithm which follows, $a_n$ will be calculated using previous coefficients $a_k$ for $k<n$ and $P_k$, also for $k<n$.
	This procedure gives
	\begin{gather}
		\sum_{j=0}^\infty b_jx^{w(u-j)}\sum_{k=0}^\infty P_k(u-j)x^{-k}
		\sim x^v\sum_{n=0}^\infty c_nx^{-n}\\
		\sum_{j=0}^\infty\sum_{k=0}^\infty b_jP_k(u-j)x^{wu-wj-k}
		\sim x^v\sum_{n=0}^\infty c_nx^{-n}.\label{komp-koef}
	\end{gather}
	From this equation we deduce the necessity of (1), since $x^{wu}$ and $x^{v}$ are terms with highest power. Therefore, $w=v/u$. If this number is not rational, then the powers from the left and right side cannot be equated. If $w$ is rational, then the product $wj$ must be an integer, for each index $j$ for which $b_j$ is not null. Hence, (2) must be fulfilled. 

	After cancelation of the power $x^v$ we obtain
	\begin{equation}
		\sum_{j=0}^{\lfloor n/w \rfloor}b_jP_{n-wj}(u-j)=c_n.
	\label{c4}
	\end{equation}
	For $n=0$ this reduces to
	$$
		b_0P_0(u)=c_0.
	$$
	Since from \eqref{pot} we have $P_0(u)=a_0^u$, the relation \eqref{c0} follows, and the condition (3) from Theorem.

	Coefficients $a_k$ can be calculated recursively from \eqref{c4}. 
	This will show that required conditions are also sufficient ones.
	Since $a_n$ is contained in a relation for $P_n$, we will extract this term from the sum in \eqref{c4}:
	$$
	\sum_{j=1}^{\lfloor n/w \rfloor}b_jP_{n-wj}(u-j)+
	b_0P_n(u)=c_n,
	$$
	and use \eqref{pot}, to obtain:
	$$
	\sum_{j=1}^{\lfloor n/w \rfloor}b_jP_{n-wj}(u-j)+
	\frac{b_0}{na_0}\sum_{k=1}^{n-1}[k(1+u)-n]a_kP_{n-k}(u)+
	\frac{b_0 u P_0(u)}{a_0}a_n=c_n.
	$$
	From here the main recursion \eqref{c1} immediatelly follows.
\end{proof}

	The algorithm in this form fails in the case $u=0$. But, if the series
	$B(x)$ has the form
	\begin{equation*}
		B(x)\sim b_0+\frac{b_1}x+\frac{b_2}{x^2}+\dots
	\end{equation*}
	then $B(A(x))=C(x)$ is possible only in the case $v=0$, when
	\begin{equation*}
		C(x)\sim c_0+\frac{c_1}x+\frac{c_2}{x^2}+\dots.
	\end{equation*}
	Now, we have that $b_0=c_0$ is necessary condition. Let $u'$ be the index of the first
	coefficient $b_k$, $k\ge1$ which is different from zero. Then $B(x)$ has the form
	\begin{equation*}
		B(x)=b_0+B_1(x),\qquad B_1(x)\sim x^{u'}\biggl[b_0'+\frac{b_1'}{x}+\frac{b_2'}{x^2}+\dots  \biggr]
	\end{equation*}
	where we denoted $b_0'=b_{u'}$, $b_1'=b_{u'+1}$ and so on. Similarly
	\begin{equation*}
		C(x)=b_0+C_1(x),\qquad C_1(x) \sim x^{w'}\biggl[c_0'+\frac{c_1'}{x}+\frac{c_2'}{x^2}+\dots  \biggr]
	\end{equation*}
	Now, the equation $B(A(x))=C(x)$ is equivalent to $B_1(A(x))=C_1$. But leading exponent in this equation is $u'\ne0$ and the problem is reduced to the case covered by Theorem~\ref{komp}. Therefore, for the computational point of view, the following 
	case:
	\begin{equation*}
		B(x)\sim d+x^u\sum_{n=0}^\infty b_nx^{-n},\qquad
		C(x)\sim d+x^v\sum_{n=0}^\infty c_nx^{-n},
	\end{equation*}
	is for integer $u$, $u<0$ equivalent to the one covered in Theorem~\ref{komp} and the same algorithm can be applied. In fact, the same conclusion holds for every noninteger $u$.

\subsection{Case $w<0$}

	Let us cover this case too. It is not important in our applications, but it can be interesting from another point of view.

\begin{thm}
		Let $B(x)$ and $C(x)$ be asymptotic series given by \eqref{B} and \eqref{C}. Then there exists series $A(x)$ of the form \eqref{A} where $w<0$ such that \eqref{BA} holds if and only if the following conditions hold:
		\begin{enumerate}
			\item
				representation of $B$ is finite, ie.\ 
				\begin{equation}
					B(x)\sim x^u\sum_{n=0}^M b_nx^{-n}
				\end{equation} 
				where $M$ is such that $b_M \neq 0$.
			\item
				\begin{equation}
					w=\frac{v}{u-M}<0
				\end{equation}
		\end{enumerate}
		With that properties satisfied, coefficients of series $A(x)$ can be calculated using recursive formula:
		\begin{align}
		 a_0 &= \left( \frac{c_0}{b_M} \right)^{M-u},\\
		 a_n &=  \frac{a_0^{1-u+M}}{(u-M)b_M} \bigg( c_n-\frac{b_M}{n a_0}\sum_{k=1}^{n-1}\left[k(1+u-M)-n\right]a_kP_{n-k}(u-M)\\
		 &\qquad\qquad-\sum_{j=1}^{\min\{M,-n/w\}}b_{M-j}P_{n+wj}(u-M+j) \bigg)
		\end{align} 
\end{thm}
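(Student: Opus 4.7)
The plan is to parallel the proof of Theorem~\ref{komp}, but with the roles of the leading and trailing coefficients of $B$ swapped: now the dominant behaviour of $B(A(x))$ comes from $b_M$ rather than $b_0$, because $A(x)\to 0$ instead of $A(x)\to\infty$. The main obstacle, and the structurally new ingredient compared to the $w>0$ case, is to justify that $B$ must have a finite expansion (condition (1)). Since $w<0$ forces $A(x)\sim a_0 x^w\to 0$, the power $A(x)^{u-j}\sim a_0^{u-j}x^{w(u-j)}$ has $x$-exponent $wu-wj$, which increases without bound as $j\to\infty$ (because $-w>0$). Consequently, if the expansion of $B$ were infinite, the formal series $\sum_j b_j A(x)^{u-j}$ would contain arbitrarily large positive powers of $x$, incompatible with $C(x)\sim x^v\sum c_n x^{-n}$ whose exponents are bounded above by $v$. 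Hence $B$ must terminate at some last nonzero coefficient $b_M$.

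Having reduced to a polynomial-like $B$, I would rewrite $B(A(x))=\sum_{j=0}^M b_{M-j}A(x)^{u-M+j}$, a reindexing that places the asymptotically dominant term at $j=0$. Because $w<0$ and $j\ge 0$, the exponent $w(u-M+j)$ is maximised at $j=0$, giving leading power $x^{w(u-M)}$; matching with the leading power $x^v$ on the right yields condition (2), namely $w(u-M)=v$. Matching the leading coefficients gives $b_M a_0^{u-M}=c_0$, hence the initialisation $a_0=(c_0/b_M)^{1/(u-M)}$, which is the base case of the recursion.

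For $n\ge 1$, I would expand each $A(x)^{u-M+j}$ via Lemma~\ref{potl} to obtain
\[
B(A(x))\sim \sum_{j=0}^M\sum_{k=0}^\infty b_{M-j}\,P_k(u-M+j)\,x^{w(u-M+j)-k}.
\]
Matching the coefficient of $x^{v-n}$ forces $k=n+wj$, which is admissible only when $j\le\min\{M,-n/w\}$, and produces the identity
\[
b_M P_n(u-M)+\sum_{j=1}^{\min\{M,-n/w\}} b_{M-j}\,P_{n+wj}(u-M+j)=c_n.
\]
Then, exactly as in the proof of Theorem~\ref{komp}, I would split $P_n(u-M)$ using the recursion \eqref{pot}, isolating the $k=n$ contribution which supplies the term $(u-M)\,a_n\,a_0^{u-M-1}$ carrying the unknown $a_n$. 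Solving for $a_n$ gives the advertised formula. The fact that the recursion produces a well-defined coefficient at every step shows that the stated conditions are also sufficient.
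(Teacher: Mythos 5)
Your proof is correct and follows essentially the same route as the paper: from \eqref{komp-koef} the negativity of $w$ forces the expansion of $B$ to terminate at some $b_M\neq0$, matching the leading terms gives $w=v/(u-M)$ and $b_M a_0^{u-M}=c_0$, and extracting the $k=n$ term of $P_n(u-M)$ via \eqref{pot} yields the stated recursion. Note that your initialisation $a_0=(c_0/b_M)^{1/(u-M)}$ is the one actually consistent with $b_M P_0(u-M)=c_0$ and with the prefactor $a_0^{1-u+M}/((u-M)b_M)$ in the recursion; the exponent $M-u$ printed in the theorem appears to be a typo.
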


\begin{proof}
	The proof is similar to the proof of Theorem \ref{komp}. From the relation \eqref{komp-koef} we see that term with the highest power of $x$ on the left side has the form $b_ja_0^{u-j}x^{wu-wj}$. Since $w$ is negative and $j\geq0$, there must exist some $M$ such that $b_n=0$ for $n>M$ and $b_M\neq0$ to obtain the maximum. It follows that $w=\frac{v}{u-M}$. After arranging sums in the \eqref{komp-koef} we get
	\begin{equation*}
		\sum_{n=0}^{\infty}\sum_{j=0}^{\min\{M,-n/w\}}b_{M-j}P_{n+wj}(u-M+j)x^{-n}
		\sim\sum_{n=0}^\infty c_n x^{-n}
	\end{equation*}
	and then
	\begin{equation*}
		\sum_{j=1}^{\min\{M,-n/w\}}b_{M-j}P_{n+wj}(u-M+j)+b_M P_n(u-M)= c_n.
	\end{equation*}
	Extracting $a_n$ from $P_n(u-M)$ follows the statement.
\end{proof}

\begin{exa}
	Let us find asymptotic series which satisfies an equation
	$A^2(x)+1/A(x)=x^2$. Here $B(x)=x^2+1/x$, $C(x)=x^2$. Hence, 
	for the first solution we have $u=2$, $v=2$, $w=1$ and (2.9) will give
	\begin{equation*}
		A(x)\sim x\bigg(1-\frac1{2x^3}-\frac{3}{8x^6}-\frac1{2x^9}
		-\frac{105}{128x^{12}}
		-\frac{3}{2x^{15}}
		-\frac{3003}{1024x^{18}}-\frac{6}{x^{21}}+\dots\bigg)
	\end{equation*}

	The expansion of the function $B$ is finite, and this enables another solution for which $M=3$, hence $w=-2$ and (2.17) gives
	\begin{equation*}
		A(x)\sim \frac1{x^2}\bigg(1+\frac1{x^6}+\frac3{x^{12}}+\frac{12}{x^{18}}
		+\frac{55}{x^{24}}
		+\frac{273}{x^{30}}
		+\frac{1428}{x^{36}}-\frac{7752}{x^{42}}+\dots\bigg).
	\end{equation*}
\end{exa}	

\section{Applications to integral means}

        The Theorem~\ref{komp} can be applied to
        \begin{equation}
	\begin{aligned}
        A(x)&=I_f(x+s,x+t)\sim x\sum_{n=0}^\infty a_nx^{-n},\\
	B(x)&=f(x)\sim x^u\sum_{n=0}^\infty b_nx^{-n},\\
        C(x)&=\frac{1}{t-s}\int_s^t f(x+u)\du \sim x^v\sum_{n=0}^\infty c_nx^{-n}.
	\end{aligned}
        \end{equation}
        In order to use this theorem, it is necessary to know asymptotic 
	expansion of the function $f$ and of the integral $C(x)$.
        The coefficients in the expansion of the function $C(x)$ can be easily calculated, but the form of this coefficients depends on the fact whether $B(x)$ contains the term $1/x$ or not.
	Notice that integral means of the functions $f$ and $\lambda f$ coincide for every constant $\lambda\ne0$. Therefore, we may assume that
	it holds $b_0=1$. From this, it will also be satisfied that $c_0=1$ and $a_0=1$.

	Let us determine coefficients $c_n$ when $u$ is not an integer or $u\leq-2$.
\begin{align*}
	C(x)&=B(A(x))=f(I_f)=\frac{1}{t-s}\int_{x+s}^{x+t} f(z)\dif z \\
	&\sim\frac{1}{t-s}\int_{x+s}^{x+t} \sum_{n=0}^{\infty}b_n z^{u-n} \dif z \\
	&\sim\frac{1}{t-s}\sum_{n=0}^{\infty} \frac{b_n}{u-n+1} ((x+t)^{u-n+1}-(x+s)^{u-n+1}) \\
	&\sim\frac{1}{t-s}\sum_{n=0}^{\infty} \frac{b_n}{u-n+1} 
	\sum_{k=1}^{\infty}\binom{u-n+1}{k} (t^k-s^k)x^{u-n+1-k}\\
	&\sim\sum_{m=1}^{\infty}\sum_{n=0}^{m-1} \frac{b_n}{u-n+1}  
	\binom{u-n+1}{m-n} \frac{t^{m-n}-s^{m-n}}{t-s} x^{u+1-m}\\
	&\sim x^u \sum_{n=1}^{\infty}\sum_{k=0}^{n-1} \frac{b_k}{u-k+1}  
	\binom{u-k+1}{n-k} \frac{t^{n-k}-s^{n-k}}{t-s} x^{-n+1}\\
	&\sim x^u \sum_{n=0}^{\infty} 
			\left( \sum_{k=0}^{n} \frac{b_k}{u+1-k}  
		\binom{u+1-k}{n+1-k} \frac{t^{n+1-k}-s^{n+1-k}}{t-s}  \right)
		 x^{-n}\\
	&\sim x^u \sum_{n=0}^{\infty} 
			\left( \sum_{k=0}^{n} \frac{b_k}{n+1-k}  
		\binom{u-k}{n-k} \frac{t^{n+1-k}-s^{n+1-k}}{t-s}  \right)
		 x^{-n}.
\end{align*}

	Note that we obtained $v=u$ and therefore the leading term in asymptotic expansion of mean is $x$ (i.e. $w=1$).
	
	We will now cover the general case when term with power $x^{-1}$ is a member of asymptotic expansion of the function $f$. It is the term with coefficient $b_{u+1}$, whenever $f$ has a standard representation
	$$
		f(x)\sim x^u\sum_{n=0}^\infty b_n x^{-n}
	$$
	where $u$ is an integer, $u\ge -1$.

	The difference between this and previous case is different asymptotic expansion of the function $C(x)$.
	The value of integral of the function $1/x$ will be denoted as
	$$
		D(x)=\frac1{t-s}\int_s^t\frac{\dif z}{x+z}=\frac1x\cdot
		\sum_{n=0}^\infty d_n x^{-n}.
	$$
	Then it is easy to derive
	\begin{equation}
	\label{coef-d}
		d_n=\frac{(-1)^{n}}{n+1}\cdot\frac{t^{n+1}-s^{n+1}}{t-s}.
	\end{equation}
	
	In this case, the coefficients $\overline c_n$ of the function $C(x)$ have the following form
		\begin{equation}
		\overline c_n= \left\{
		\begin{array}{cl}
			c_n, & 0\leq n\leq u,\\
			b_{u+1}, & n=u+1,\\
			c_n'+b_{u+1}d_{n-u-1}, & n\geq u+2,
		\end{array}
		 \right. 
	\end{equation}
	where $(c_n)$ is defined by \eqref{coef-cn1}, $(d_n)$ is defined by \eqref{coef-d} and
	\begin{equation}
		c_n'=\sum_{k=u+2}^{n} \frac{b_k}{n+1-k} \binom{u-k}{n-k}\frac{t^{n+1-k}-s^{n+1-k}}{t-s}.
	\label{coef-cn2}
	\end{equation}
	To explain this, it is sufficient to note that term $b_{u+1}x^{-1}$ has no influence to the coeficient with nonnegative powers, hence
	$\overline c_n=c_n$ for $n\le u$. The only term with power $x^{0}$ can arise from the logarithm function obtained by integration of this term.
	The coeficient of this term is $\overline c_{u+1}$ and it is equal to $b_{u+1}$.
	If $n\ge u+2$, then $
	\overline c_n$ is the sum two parts, one obtained from integration of negative power of $x$, and the second one from the expansion of the logarithm function.

	But, by the direct inspection of the coefficients, it is evident that $\overline c_n$ is equal to $c_n$ for al values of $n$.

	Therefore, we can summarize this discussion in the form of a theorem.

\begin{thm}
        \label{tm1}
	Let function $f$ has the folowing asymptotic expansion 
        \begin{equation}
                f(x)\sim x^u\sum_{n=0}^{\infty}b_nx^{-n}.
        \end{equation}
	Then integral mean has the form
	\begin{equation}
                I_f(x+s,x+t)\sim x \sum_{n=0}^{\infty}a_n x^{-n}
        \label{intmean}
	\end{equation} 
        and coefficients $a_n$ satisfy the following recursive relation:
	\begin{align}
		a_0&=1,\\
		a_n&=-\frac{1}{b_0 u} \bigg(
		\sum_{j=1}^{n}b_j P_{n-j}(u-j)\nonumber\\
	&\quad \quad+
		\frac{b_0}{n} \sum_{k=1}^{n-1}(k(1+u)-n)a_kP_{n-k}(u)-c_n 
		\bigg),
	\end{align}
	where $(P_n)$ are defined by \eqref{pot}
	and
	\begin{equation}
	c_n= \sum_{k=0}^{n} \frac{b_k}{n+1-k}  
	\binom{u-k}{n-k} \frac{t^{n+1-k}-s^{n+1-k}}{t-s}.
	\label{coef-cn1}
	\end{equation}
\end{thm}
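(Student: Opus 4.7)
The plan is to apply Theorem~\ref{komp} to the identity $f(I_f(x+s,x+t)) = \frac{1}{t-s}\int_s^t f(x+u)\,\du$, which is just the definition of the integral mean rewritten. Setting $A(x) = I_f(x+s,x+t)$, $B(x) = f(x)$, and $C(x) = \frac{1}{t-s}\int_s^t f(x+u)\,\du$, this is exactly equation \eqref{BA}. By assumption $B(x) \sim x^u \sum b_n x^{-n}$ with $b_0 = 1$ (after normalization, as noted before the theorem), so I would first determine the exponent and coefficients of the expansion of $C(x)$.

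Next I would carry out the termwise integration of the series for $f(x+z)$ and expand in decreasing powers of $x$ by the binomial theorem. This is precisely the displayed computation above the theorem, and it yields $C(x) \sim x^u \sum_{n\ge0} c_n x^{-n}$ with $c_n$ as in \eqref{coef-cn1}; in particular $v = u$ and $c_0 = 1$. With $v = u$ the three hypotheses of Theorem~\ref{komp} are immediate: $w = v/u = 1$ is rational and positive; $nw = n$ is always a positive integer so no $b_j$ is forced to vanish; and $b_0 = c_0 = 1$ share sign. Plugging $w = 1$, $a_0 = 1$, $P_0(u) = 1$ into the recursion \eqref{c0}--\eqref{c1} then gives exactly the recursion stated in the theorem, with the inner sum terminating at $j = \lfloor n/1 \rfloor = n$.

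The subtle point, and the main obstacle, is that Theorem~\ref{komp} was derived under the tacit assumption that termwise integration of $B(x) \sim x^u \sum b_n x^{-n}$ produces only powers; when $u$ is an integer with $u \ge -1$, the term $b_{u+1} x^{-1}$ integrates to a logarithm, and Theorem~\ref{komp} does not apply verbatim to the intermediate computation of $C(x)$. I would handle this exactly as sketched in the paragraph immediately preceding the theorem: split $f$ into its polynomial part and the term $b_{u+1}/x$, integrate the latter separately to obtain the function $D(x)$ with coefficients $d_n$ from \eqref{coef-d}, and assemble $\overline c_n$ from $c_n'$ of \eqref{coef-cn2} together with $b_{u+1} d_{n-u-1}$ according to the three-case formula displayed after \eqref{coef-d}.

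The key step is then to verify the identity $\overline c_n = c_n$ for every $n$, which allows the single unified formula \eqref{coef-cn1} to be used regardless of whether $u$ produces a logarithmic term. This reduces to checking that the $k = u+1$ summand in \eqref{coef-cn1} coincides with $b_{u+1} d_{n-u-1}$ from \eqref{coef-d}; specializing $k = u+1$ in \eqref{coef-cn1} gives the binomial coefficient $\binom{-1}{n-u-1}$ together with the factor $\frac{1}{n-u}\cdot\frac{t^{n-u}-s^{n-u}}{t-s}$, and a short algebraic manipulation of $\binom{-1}{n-u-1} = (-1)^{n-u-1}$ matches this with $\frac{(-1)^{n-u-1}}{n-u}\cdot\frac{t^{n-u}-s^{n-u}}{t-s} = d_{n-u-1}$. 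Once this bookkeeping is checked, Theorem~\ref{komp} applies with the unified $c_n$ and yields the stated recursion. Everything else is routine substitution into \eqref{c0}--\eqref{c1}.
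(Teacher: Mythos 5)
Your proposal follows the paper's own route exactly: identify $A=I_f$, $B=f$, $C$ the averaged integral, compute $c_n$ by termwise integration and binomial expansion to get $v=u$, $w=1$, handle the $b_{u+1}x^{-1}$ term separately via $D(x)$ and the three-case formula for $\overline c_n$, and then specialize the recursion \eqref{c0}--\eqref{c1} of Theorem~\ref{komp} with $a_0=1$, $P_0(u)=1$. Your explicit verification that the $k=u+1$ summand of \eqref{coef-cn1} reproduces $b_{u+1}d_{n-u-1}$ (together with the implicit fact that the $k\le u$ summands vanish for $n\ge u+1$ because $\binom{u-k}{n-k}=0$) is actually more detailed than the paper, which settles $\overline c_n=c_n$ by ``direct inspection''.
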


\section{Examples}

\begin{exa}
	Asymptotic expansion of generalized logarithmic mean.
	Suppose $f(x)=x^r$, $r\ne0$, $r\ne-1$. Then integral mean has the form
	\eqref{intmean} where $a_0=1$ and
	\begin{equation}
		a_n=-\frac{1}{r}\left(\frac1n\sum_{k=1}^{n-1}(k(1+r)-n)a_kP_{n-k}(r)-c_n\right)
	\end{equation}
	where $(P_n)$ are defined by \eqref{pot}
	and
	\begin{equation}
	c_n= \frac{1}{n+1}  
	\binom{r}{n} \frac{t^{n+1}-s^{n+1}}{t-s}.
	\end{equation}
\end{exa}

	See \cite{EV} for another approach to this problem.
	The first few coefficients are
	\begin{equation}
	\label{genmean2}
	\begin{aligned}
	a_0&=1,\\
	a_1&=\a,\\
	a_2&=\tfrac{1}{6}(r-1)\b^2,\\
	a_3&=-\tfrac{1}{6}(r-1)\a\b^2,\\
	a_4&=\tfrac{1}{360}(r-1)\b^2[(-2r^2-5r+13)\b^2+60\a^2].\\
	\end{aligned}
	\end{equation}

\begin{exa}
	Polygamma function $\psi^{(m)}$, $m\geq 1$ has following asymptotic expansion
	\begin{equation}
		\psi^{(m)}(x) \sim (-1)^{m-1}\left(\frac{(m-1)!}{x^m}+\frac{m!}{2x^{m+1}}+\sum_{n=1}^\infty \frac{(2n+m-1)!}{(2n)!}B_{2n} x^{-2n-m}\right).
	\end{equation}
	Therefore,
	\begin{align*}
	I_{\psi^{(m)}}(x+s,x+t)\sim x+\a-\frac16(m+1)\b^2x^{-1}+\frac{1}{12}(m+1)(2\a-1)\b^2 x^{-2}+&\\
	+\frac{1}{360}(m+1)(2m^2\b^2-5m\b^2-13\b^2-60\a^2+60\a+5m-5)\b^2x^{-3}+\cdots&
	\end{align*}

	This and similar results are obtained in \cite{CEV2}, where some properties of polynomial coefficients are also discussed.
\end{exa}

\begin{exa} 
	Let us choose $f(x)=\dfrac1{x^2}+\dfrac1{x^3}$. Because of the difficulty with finding inverse formula, the exact value of integral mean will not be calculated.
	Using algorithm described in the Theorem~\ref{tm1},
	we can write
	$$
		I_f(x+s,x+t)
		\sim
		x\biggl(\sum_{n=0}^\infty a_nx^{-n}\biggr)
	$$
	where
	\begin{align*}
		a_0&=1,\\
		a_1&=\a,\\
		a_2&=-\tfrac12\b^2,\\
		a_3&=\tfrac{1}{4}\b^2(-1+2\a),\\
		a_4&=-\tfrac{1}{8}\b^2(-3-4\a+4\a^2+\b^2),\\
		a_5&=\tfrac{1}{16}\b^2(8\a^3-12\a^2+6\a(-3+\b^2)-3(3+\b^2)),\\
		\vdots  
	\end{align*}

	Since 
	$$
		\frac1{t-s}\int_s^tf(x+u)du=f(I_f(x+s,x+t))
	$$
	we can check the numerical accuracy of these formulae. Let us take a decent (relatively small) values for integral bounds, for example:
	$$
		I=\frac1{10}\int_{100}^{110}\biggl(\dfrac1{x^2}+\dfrac1{x^3}\biggr)dx.
	$$
	Then, the best choice is $x=105$, $s=-5$, $t=5$ since this will give $\a=0$ and $\b=5$. We have
	$$
		I_f(x-t,x+t)\sim I_4=x-\frac{\b^2}{2x}-\frac{\b^2}{4x^2}
		-\frac{\b^2(\b^2-3)}{8x^3}-\frac{3\b^2(\b^2+3)}{16x^4}
	$$
	Now we can compare:
	\begin{align*}
		I&=9.1776859504\cdot 10^{-5},\\
		f(I_4)&=9.1776859416\cdot 10^{-5}
	\end{align*}
	so the relative error is of order $9.53\cdot10^{-10}$.

	The same precision can be obtained using Simpson rule with 16 nodes. Adding additional terms
	\begin{align*}
		I_5&=I_4+\frac{\b^2(27+ 18\b^2-2\b^4)}{32x^5},\\
		I_6&=I_5-\frac{\b^2(81+ 78\b^2+ 10\b^4)}{64x^6}
	\end{align*}
	the relative errors correspond to Simpson rule with 32 and 104 nodes.

	Of course, taking greater value for $x$, the precision of asymptotic methods increases rapidly. For example, $x=1005$ with other values unchanged gives relative error for $f(I_6)$ of order $5.5\cdot10^{-21}$, while Simpson rule with 32 nodes has error of order $6.2\cdot 10^{-15}$.
\end{exa}

\section{Logarithmic case}

	The theory developed so far does not cover all important applications. For example, the digamma case described in introduction is not covered yet. The reason for this is the logarithm which is the part of the function $f$. Let us cover this case too.

	Suppose that function $f$ has the following asymptotic expansion
	\begin{equation}
		f(x)\sim b\cdot\log x+x^{-1}\sum_{n=0}^\infty b_nx^{-n}.	
	\end{equation}
	Here, for the case of simplicity, the choice $u=-1$ was made. However, any integer $u<0$ will lead to succesfull algorithm.

	The logarithm has influence in the expansion of the integral function $C(x)$ which is easy to describe,
	\begin{align*}
		g(x)&=\frac{1}{t-s}\int_s^t \log (x+z)\dif z\\
		&=\frac{(x+t)\log(x+t)-(x+s)\log(x+s)}{t-s}-1\\
                &=\log x+\frac1{t-s}\biggl[
                (x+t)\log\Bigl(1+\frac tx\Bigr)-(x+s)\log\Bigl(1+\frac sx\Bigr)\biggr]-1\\
                &=\log x+\sum_{n=1}^\infty(-1)^{n-1}\frac{t^{n+1}-s^{n+1}}{n(n+1)(t-s)}\,x^{-n}\\
                &=\log x-\sum_{n=1}^\infty\frac{d_n}n\,x^{-n}.
        \end{align*} 
	where $(d_n)$ are defined by \eqref{coef-d}.
	This expansion multiplied by $b$ should be added to the usual expansion given in Theorem~\ref{tm1}.

	On the other hand, $f(A(x))$ has an additional term:
	$$
		b\log(A(x))=b\log x+b\log\bigg(\sum_{n=0}^\infty a_nx^{-n}\bigg).
	$$
	The term $b\log x$ will be canceled from the both sides of equations, and the rest can be arranged as before in a series of recursive relations. Here, we use the transformation given in Lemma~\ref{lemma-log}:
	\begin{align*}
		f(A(x))&\sim b \log(A(x))+\sum_{n=0}^\infty b_n(A(x))^{-n-1}\\
		&\sim b\log x+ b\log\biggl(\sum_{n=0}^\infty a_nx^{-n}\biggr)
		+\sum_{n=1}^\infty b_{n-1}\biggl(\sum_{k=0}^\infty a_kx^{-k}\biggr)^{-n}x^{-n}\\
		&\sim b\log x+b\sum_{n=1}^\infty L_nx^{-n}+\sum_{n=1}^\infty
		b_{n-1}\biggl(\sum_{k=0}^\infty P_k(-n)x^{-k}\biggr)x^{-n}\\
		&\sim b\log x+b\sum_{n=1}^\infty L_nx^{-n}+\sum_{n=1}^\infty
		\biggl(\sum_{k=0}^{n-1}b_k P_{n-1-k}(-k-1)\biggr)x^{-n}\\
		&\sim b\log x-b\sum_{n=1}^\infty\frac{d_n}n\,x^{-n}+\sum_{n=1}^\infty c_{n-1}x^{-n}.
	\end{align*}
	We should now equate coefficients of the term $x^{-n}$. The highest index of the sequence $(a_n)$ which appear here is in the $L_n$ term. Since $a_0=1$, we have
	\begin{equation*}
		L_n=a_n-\frac1n\sum_{k=1}^{n-1}kL_ka_{n-k}.
	\end{equation*}
	Hence, we finally obtain
	\begin{equation*}
		b\biggl(a_n-\frac1n\sum_{k=1}^{n-1}L_ka_{n-k}\biggr)
		+\sum_{k=0}^{n-1}b_k P_{n-1-k}(-k-1)=-\frac{bd_n}{n}+c_{n-1}.
	\end{equation*}
	From here, $a_n$ can be recursively calculated.
	The exact algorithm is given in the next theorem.

\begin{thm}\label{log2}
	Let
	\begin{equation}
		f(x)\sim b\log x+x^{-1}\sum_{n=0}^\infty b_nx^{-n}.
	\end{equation}
	Then coefficients from asymptotic expansion
	\begin{equation}
		I_f(x+s,x+t)\sim x\sum_{n=0}^\infty a_nx^{-n}
	\end{equation}
	can be calculated as follows
	\begin{align}
		a_0&=1,\\
		a_n&=-\frac{d_n}{n}+\frac{c_{n-1}}{b}	+\frac1n \sum_{k=1}^{n-1}k L_k a_{n-k}-\frac1b\sum_{k=0}^{n-1}b_k P_{n-1-k}(-k-1),
	\end{align}
	where
	\begin{equation*}
		L_n=a_n-\frac1n \sum_{k=1}^{n-1}k L_k a_{n-k},
	\end{equation*}
	and $(P_n)$ is given in Lemma~\ref{pot}.
\end{thm}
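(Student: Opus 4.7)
The plan is to equate the two asymptotic series $B(A(x)) = f(I_f(x+s,x+t))$ and $C(x) = \frac{1}{t-s}\int_s^t f(x+u)\,\dif u$, in direct analogy with the proof of Theorem~\ref{komp}, but separating out the logarithmic contribution that falls outside the pure-power setup.

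First I would compute the expansion of $C(x)$. Split $f(x) = b\log x + \tilde f(x)$ with $\tilde f(x) \sim x^{-1}\sum_{n\ge0} b_n x^{-n}$. The logarithmic integral is computed in closed form (as already displayed just before the theorem) and yields $b\log x - b\sum_{n\ge1}\frac{d_n}{n}\,x^{-n}$, where the $d_n$ are the coefficients of the expansion of $\frac{1}{t-s}\int_s^t(x+z)^{-1}\dif z$ from \eqref{coef-d}. The integral of $\tilde f(x+u)$ produces a power series whose coefficients, after an index shift, are precisely the $c_{n-1}$ given by \eqref{coef-cn1} specialized to $u=-1$. Thus
$$C(x) \sim b\log x - b\sum_{n\ge1}\frac{d_n}{n}\,x^{-n} + \sum_{n\ge1} c_{n-1}\,x^{-n}.$$

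Second I would expand $B(A(x)) = b\log(A(x)) + \sum_{n\ge0} b_n(A(x))^{-n-1}$. Since $A(x) = x\sum a_n x^{-n}$ with $a_0 = 1$, Lemma~\ref{lemma-log} (applied to $A(x)/x$) gives $b\log A(x) = b\log x + b\sum_{n\ge1} L_n x^{-n}$, and Lemma~\ref{potl} applied to the inner series $\sum a_k x^{-k}$ gives $(A(x))^{-n-1} = x^{-n-1}\sum_{k\ge0} P_k(-n-1)x^{-k}$. Collecting the coefficient of $x^{-n}$ in the power-series part yields $\sum_{k=0}^{n-1} b_k P_{n-1-k}(-k-1)$.

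Third, after canceling $b\log x$ on both sides and equating the coefficient of $x^{-n}$, one obtains
$$bL_n + \sum_{k=0}^{n-1} b_k P_{n-1-k}(-k-1) = -\frac{b d_n}{n} + c_{n-1}.$$
The key observation is that $a_n$ appears on the left-hand side only inside $L_n$: each $P_{n-1-k}(-k-1)$ depends only on $a_0,\dots,a_{n-1-k}$, so the index tops out at $a_{n-1}$. Using the identity $L_n = a_n - \frac{1}{n}\sum_{k=1}^{n-1} kL_k a_{n-k}$ (Lemma~\ref{lemma-log} with $a_0=1$), one solves for $a_n$ and the stated recursion follows by dividing through by $b$.

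The main obstacle is bookkeeping: keeping track of which summation index shifts (the $b_{n-1}$ that becomes $b_k$ after reindexing, and the $P_k(-n)$ versus $P_{n-1-k}(-k-1)$ reshuffling) while ensuring $a_n$ isolates cleanly on one side. Once one verifies that $a_n$ enters only through $L_n$ and that $L_n$ is linear in $a_n$ with coefficient $1$, the algebraic rearrangement is routine, and the formula for $a_n$ in the statement is immediate.
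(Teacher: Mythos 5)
Your proposal is correct and follows essentially the same route as the paper: expand the integral $C(x)$ by splitting off the logarithm (giving the $-b\,d_n/n$ terms plus the $c_{n-1}$ from \eqref{coef-cn1} with $u=-1$), expand $f(A(x))$ via Lemma~\ref{lemma-log} and Lemma~\ref{potl}, cancel $b\log x$, equate coefficients of $x^{-n}$, and solve for $a_n$ using that it enters only through $L_n$ linearly with coefficient $1$. The reindexing you describe (coefficient of $x^{-n}$ equal to $\sum_{k=0}^{n-1}b_kP_{n-1-k}(-k-1)$) matches the paper's derivation, so nothing is missing.
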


\begin{exa}
Asymptotic expansion of $\psi$ function is known
	$$
	\psi(x)\sim \log x+x^{-1}\sum_{n=0}^{\infty} \frac{(-1)^nB_{n+1}}{n+1}x^{-n}.
	$$
	Asymptotic expansion of integral mean of digamma function can easily be calculated using Theorem \ref{log2}
	\begin{align*}
		I_{\psi}&\sim  x+\a-\frac{\b^2}{6}x^{-1}+\frac{(2\a-1)\b^2}{12}x^{-2}
		 -\frac{\left(5+60\a(\a-1)+13\b^2\right)\b^2}{360}x^{-3}+\cdots
	\end{align*}
	Of course, we obtain results from \eqref{coef-psi}.
\end{exa}

\begin{exa}
	Another extreme is the function $f(x)=\log x$, when all coefficients $b_k$ are equal to zero. The corresponding integral mean is identric mean $I(s,t)$.
	Applying the algorithm of Theorem~\ref{log2}, we obtain the following coefficients:
	\begin{equation*}
		I(s,t)=I_{\log x}(s,t)\sim x\sum_{n=0}^\infty a_kx^{-k}
	\end{equation*}
	where
	\begin{align*}
		a_0&=1,\\
		a_1&=\a,\\
		a_2&=-\tfrac16\b^2,\\
		a_3&=\tfrac{1}{6}\a\b^2,\\
		a_4&=-\tfrac{1}{360}\b^2(60\a^2+13\b^2),\\
		a_5&=\tfrac{1}{120}\a\b^2(20\a^2+13\b^2),\\
		a_6&=-\tfrac1{45360}\b^2(7560\a^4+9828\a^2\b^2+737\b^4).
	\end{align*}
	
	Identric mean is special case of generalized logarithmic mean, obtained taking a limit $r\to0$. So, it is not a surprise that these coefficients coincide to one calculated in \eqref{genmean2} if we choose there $r=0$. 
\end{exa}

\section{Integral mean of Wallis quotient and Wallis power function}

	Wallis quotient is the name for the ratio of two gamma functions:
	\begin{equation*}
		W(x,t,s)=\frac{\Gamma(x+t)}{\Gamma(x+s)}
	\end{equation*}
	and Wallis power function is defined as
	\begin{equation*}
		F(x,t,s)=\biggl(\frac{\Gamma(x+t)}{\Gamma(x+s)}\biggr)^{1/(t-s)}.
	\end{equation*}
	These two functions have important role in various parts of applied and theoretical mathematics.

	An efficient algorithm for asymptotic expansion of these functions were derived recently by the first author and T.\ Buri\'c, see \cite{BE1,BE2} for details. We shall use now derived algorithm for calculation of integral mean of these functions.

	Let us start with Wallis power function which has more natural expansion. We should change notation of arguments in order not to interfere with bounds of integral mean.
	In \cite{BE1} was proved that Wallis power function
	\begin{equation}
		F(x,t',s')=\left[ \frac{\Gamma(x+t')}{\Gamma(x+s')}\right]^{\frac1{t'-s'}}
	\end{equation}
	has the following asymptotic expansion
	\begin{equation}
		F(x,t',s')\sim x+\sum_{n=0}^{\infty}Q_{n+1}(\alpha',\beta')x^{-n}.
	\end{equation}
	where $\a'$ and $\b'$ are internal variables defined by
	$$
		\a'=\frac12(s'+t'+1),\qquad \b'=\frac14[1-(t'-s')^2],
	$$
	and $(Q_n)$ are polynomials:
        \begin{align}
	\label{pol-Q}
	 	Q_0&=1\nonumber\\
        Q_1&=\a'\nonumber\\
        Q_2&=\tfrac16\b'\nonumber\\
        Q_3&=-\tfrac16\a'\b'\\
        Q_4&=\tfrac16\a'^2\b'-\tfrac1{60}\b'-\tfrac{13}{360}\b'^2\nonumber\\
        Q_5&=-\tfrac16\a'^3\b'+\tfrac1{20}\a'\b'+\tfrac{13}{120}\a'\b'^2\nonumber\\
        Q_6&=\tfrac16\a'^4\b'-\tfrac1{10}\a'^2\b'-\tfrac{13}{60}\a'^2\b'^2
        +\tfrac1{126}\b'+\tfrac{53}{2520}\b'^2+
        \tfrac{737}{45360}\b'^3\nonumber
        \end{align}

	Now we apply described procedure from Theorem~\ref{komp} on $b_n=Q_n(\alpha',\beta')$ and $u=1$ to obtain
	\begin{align*}
		c_0&=1,\\
		c_1&=\a+\a',\\
		c_2&=\frac{\b'}{6},\\
		c_3&=-\frac{1}{6}(\a+\a')\b',\\
		c_4&=\frac{1}{360}\left(-6+60(\a+\a')^2+20\b^2-13\b'\right)\b',\\
		\vdots
	\end{align*}
	Using recursive formula from Theorem \ref{komp} we calculate coefficients in asymptotic expansion of the integral mean $I_F$. Here are the first few terms:
	\begin{align*}
		 a_0&=1,\\
		 a_1&=\a,\\
		 a_2&=0,\\
		 a_3&=0,\\
		 a_4&=\frac{1}{18}\b^2\b',\\
		 a_5&=-\frac16(\a+\a')\b^2\b'\\
		 a_6&=\frac{1}{270}\left( -9+90(\a+\a')^2+9\b^2-17\b'\right)\b^2\b'\\
		 a_7&=-\frac{1}{54}(\a+\a')(-9+30(\a+\a')^2+9\b^2-17\b')\b^2\b'\\
		 &\vdots
	\end{align*}

	Although we have here four parameters, the first two essential coefficients $a_2$ and $a_3$ disappear. We will explain this situation later.
	
	Let us now derive similar expansion for the Wallis ratio.
	It has the following asymptotic expansion (see \cite{BE2})
	\begin{equation}
		W(x,t',s')=\frac{\Gamma(x+t')}{\Gamma(x+s')}\sim \sum_{n=0}^\infty C_n(\a',\b')x^{\c'-n}
	\end{equation} 
	where $\c'=t'-s'$ and the first few polynomials $C_n(\a',\b')$ are 
	\begin{align*}
	\label{pol-C}
		C_0&=1,\\
        C_1&=\a'\c',\\
        C_2&=\frac16\c'(\b'+3\a'^2(\c'-1)),\\
        C_3&=\frac16\a'\c'(\b'+3\a'^2(\c'-1))(\c'-2).
        \end{align*}
	see \cite{BE1}. 
	We apply Theorem \ref{tm1} and obtain
	\begin{equation*}
		I_W(x+s,x+t)\sim x\sum_{n=0}^\infty a_nx^{-n}
	\end{equation*}
	where
	\begin{align*}
		a_0&=1,\\
		a_1&=\a,\\
		a_2&=\frac16\b^2(\c'-1),\\
		a_3&=-\frac16\b^2(\a+\a')(\c'-1),\\
		a_4&=-\frac{1}{360} \b^2 \big(60(\a+\a')^2+13\b^2-40\b'\\
		&\qquad-2 \big(30(\a+\a')^2+9\b^2-10\b'\big)\c'+3\b^2 \c'^2+2\b^2\c'^3\big),\\
		a_5&=-\frac{1}{120}\b^2 (\a+\a') \big(-20 (\a+\a')^2-13\b^2+40 \b'\\
		&\qquad+2\big(10 (\a+\a')^2+9\b^2-10 \b'\big)\c'-3\b^2\c'^2-2\b^2 \c'^3\big).
	\end{align*}

\section{Analysis of the asymptotic expansions}

	Using the algorithm given in Theorem~\ref{tm1}, the value of coefficients $(c_n)$ can be described in terms of $\a$, $\b$ and coefficients $b_n$, $n\ge1$. Here is the result. We shall restrict ourselves to the functions covered by Theorem~\ref{tm1}.
	For a function of the form
	$$
		f(x)\sim 
		x^u\biggl(1+\frac {b_1}x+\frac{b_2}{x^2}+\dots\biggr)
	$$
	we obtain
	\begin{align*}
		a_0&=1,\\
		a_1&=\a,\\
		a_2&=\frac16(u-1)\b^2,\\
		a_3&=-\frac1{6u}(u-1)\b^2(u\a+b_1)\\
		a_4&=\frac1{360}\b^4(-2u^3-3u^2+18u-13)\\
		&\qquad\quad+\frac{b_1^2\b^2}{6u^2}(u-1)^2-\frac{\b^2b_2}{3u}(u-2).
	\end{align*} 
	$a_5$ is too ugly to be writen down. Hence, the property \eqref{c3} is the consequence of the fact that all classical integral means have coefficient $b_1$ equal to zero. 

	Interesting to say, the general power mean $M_r(s,t)=[\frac12(s^r+t^r)]^{1/r}$,
	and the harmonic mean $H(s,t)=2st/(s+t)$ are not integral means, but they also have the property \eqref{c3}. See \cite{EV} for their expansions.

	We see now that integral mean of an integral mean always has coefficients $a_2$ and $a_3$ equal to zero, since corresponding value of $u$ is $u=1$. This will be the case for all functions whose asymptotic expansion begins with $x$, the Wallis power function is good example.

	The numerical calculation shows that the third iteration of integral means has four zero coefficients $c_2=c_3=c_4=c_5=0$, and so on. We do not see a good reason for trying to find the proof of this fact.


\begin{thebibliography}{10}


\bibitem{Bu}
	{\sc P. S. Bullen}, {\it Handbook of Means and Their Inequalities},

\bibitem{BMV}
	{\sc P. S. Bullen, D. S. Mitrinovi\'c, P. M. Vasi\'c}, {\it Means and theirs inequalities}, D Reidel, Dordrecht, 1988


\bibitem{BE1}
	{\sc T. Buri\'c, N. Elezovi\'c},
	{\it Bernoulli polynomials and asymptotic expansions of the quotient of
gamma functions},
	Journal of Computational and Applied Mathematics, 235 (2011) 3315--3331.
	
\bibitem{BE2}
	{\sc T. Buri\'c, N. Elezovi\'c},
	{\it New asymptotic expansions of the quotient of gamma functions},
	Integral Transforms and Special Functions, {\bf23} (2012), 355--368.

\bibitem{CEV}
        {\sc Chao-Ping Chen, Neven Elezovi\'c and Lenka Vuk\v{s}i\'c},
        {\it Asymptotic formulae associated with the Wallis power function and digamma function},
        J. Classical Anal. {\bf 2}, 2 (2013) 151--166.

\bibitem{CEV2}
        {\sc Chao-Ping Chen, Neven Elezovi\'c and Lenka Vuk\v{s}i\'c},
        {\it Asymptotic expansions of integral mean of polygamma functions},
        submitted.

\bibitem{EP}
        {\sc N. Elezovi\'c and J. Pe\v cari\'c},
        {\it  Differential and integral $f$-means and applications to
        digamma function},
        Math. Inequal. Appl., {\bf 3} (2000), 189--196

\bibitem{EGP}
        {\sc N. Elezovi\'c, C.\ Giordano and J. Pe\v cari\'c},
        {\it The best bounds in Gautschi's inequalities}
        Math. Inequal. Appl., {\bf 3}, 239--252

\bibitem{EV}
	{\sc Neven Elezovi\'c and Lenka Vuk\v{s}i\'c},
	{\it Asymptotic expansions of bivariate classical means and related inequalities},
	submitted.

\bibitem{El} {\sc Neven Elezovi\'c},
	{\it Asymptotic inequalities and comparison of classical means}

\bibitem{EV2}
	{\sc Neven Elezovi\'c and Lenka Vuk\v{s}i\'c},
	{\it Asymptotic expansions and comparison of bivariate parameter means},
	submitted.

\bibitem{Er}
        A. Erd\'elyi,
        {\it Asymptotic expansions}, Dover Publications, New York, 1956.       

\bibitem{G}
	{\sc H. W. Gould}
	{\it Coefficient identities for powers of Taylor and Dirichlet series},
	Amer. Math. Monthly {\bf 85} (1978), 84--90.


\end{thebibliography}
\end{document}